\def\Br{\operatorname{Br}}
\newtheorem{theorem}{Theorem}
\newtheorem{lemma}[theorem]{Lemma}
\newtheorem{corollary}[theorem]{Corollary}
\begin{document}

\title{Data and homotopy types}
\author{J.F. Jardine}

\maketitle

\begin{abstract}
  This paper presents explicit assumptions for the existence of interleaving homotopy equivalences of both Vietoris-Rips and Lesnick complexes associated to an inclusion of data sets. Consequences of these assumptions are investigated on the space level, and for corresponding hierarchies of clusters and their sub-posets of branch points. Hierarchy posets and  branch point posets  admit a calculus of least upper bounds, which is used to show that the map of branch points associated to the inclusion of data sets is a controlled homotopy equivalence.
  \end{abstract}

\section*{Introduction}

This paper is a discussion of
homotopy theoretic phenomena that arise in connection with inclusions $X \subset Y \subset \mathbb{R}^{n}$ of data sets in topological data analysis. 
The manuscript is not in final form, and comments are welcome.
\smallskip

Suppose that $r > 0$, and say that $X$ is $r$-dense in $Y$ if for every point $y$ in $Y$ there is an $x$ in $X$ such that the distance $d(y,x) < r$ in the ambient metric space.

The first result in this direction (Corollary \ref{cor 2x} below) implies that, under the $r$-density assumption, the induced map $V_{s}(X) \to V_{s}(Y)$ of Vietoris-Rips complexes is a homotopy equivalence if $2r < t-s$, where $t$ is the smallest number such that $V_{t}(X) \ne V_{s}(X)$.

This result can be extended to the assertion that the inclusion $L_{s,k}(X) \to L_{s,k}(Y)$ of Lesnick complexes (with fixed density parameter $k$) is a homotopy equivalence if every point $y$ in the ``configuration space'' $Y^{k+1}_{dis}$ of $k+1$ distinct elements of $Y$ has an element $x \in X^{k+1}_{dis}$ such that $d(y,x) < r$ in $\mathbb{R}^{n(k+1)}$. This result appears as Corollary \ref{cor 4x} in this paper.

Corollary \ref{cor 2x} and Corollary \ref{cor 4x} follow from Theorem \ref{th 1x} and Theorem \ref{th 3x}, respectively. Both theorems are ``interleaving'' homotopy type results (see also \cite{BlumLes}) that follow from the respective $r$-density assumptions, by methods that amount to manipulation of barycentric subdivisions. Theorem \ref{th 1x} is a special case of Theorem \ref{th 3x} (it is the case $k=0$), but Theorem \ref{th 1x} was found initially, and its proof has a certain clarity in isolation.

For a fixed $k$, the sets $\pi_{0}L_{s,k}(X)$ of path components, as $s$ varies, define a tree $\Gamma_{k}(X)$ with elements $(s,[x])$, $[x] \in \pi_{0}L_{s,k}(X)$. The inclusion $X \subset Y$ defines a poset morphism $\Gamma_{k}(X) \to \Gamma_{k}(Y)$.

The tree $\Gamma_{k}(X)$ is the object studied by the HDBSCAN clustering algorithm, while the individual sets of clusters $\pi_{0}L_{s,k}(X)$ are the objects of interest for the DBSCAN algorithm.

The poset $\Gamma_{k}(X)$ has a subobject $\Br_{k}(X)$ whose elements are the branch points of $\Gamma_{k}(X)$, suitably defined --- see Section 2. The branch points of $\Gamma_{k}(X)$ are in one to one correspondence with the stable components for $\Gamma_{k}(X)$ that are defined in \cite{clusters9}, in the sense that every such stable component starts at a unique branch point. Thus, we can (and do) replace the stable component discussion of \cite{clusters9} with the branch point poset $\Br_{k}(X)$, and make particular use of its ordering.

The tree $\Gamma_{k}(X)$ has least upper bounds, and these restrict to least upper bounds for the subobject $\Br_{k}(X)$ of branch points. This notion of least upper bounds is an extension of and a potential replacement for the distance function that is introduced by Carlsson and M\'emoli \cite{CM-2010B} in their description of the ultrametric structure on a data set $X$ that arises from the single linkage cluster hierarchy. The Carlsson-M\'emoli theory does not apply in general to the tree $\Gamma_{k}(X)$, because the vertex sets of the Lesnick complexes $L_{k,s}(X)$ vary with changes of the parameter $s$.

The calculus of least upper bounds and its relation with branch points is described in Lemmas \ref{lem 7x}--\ref{lem 12x} below.

The branch point tree $\Br_{k}(X)$ can be thought of as a highly compressed version of the hierarchy $\Gamma_{k}(X)$ that is produced by the HDBSCAN algorithm.

The inclusion $\Br_{k}(X) \subset \Gamma_{k}(X)$ is a homotopy equivalence of posets, where the homotopy inverse is defined by taking the maximal branch point $(s_{0},[x_{0}]) \leq (s,[x])$ below $(s,[x])$ for each object of $\Gamma_{k}(X)$. The existence of the maximal branch point below an object $(s,[x])$ is a consequence of Lemma \ref{lem 12x}.

The poset map $\Gamma_{k}(X) \to \Gamma_{k}(Y)$ defines a poset map $i_{\ast}: \Br_{k}(X) \to \Br_{k}(Y)$, via the homotopy equivalences for the data sets $X$ and $Y$ of the last paragraph.

The configuration space $r$-density assumption for Theorem \ref{th 3x} imples that there is a poset morphism $\theta_{\ast}: \Br_{k}(Y) \to \Br_{k}(X)$ that is induced by a morphism $(s,[x]) \to (s+2r,[\theta(x)])$, and that there are homotopies of poset maps $i_{\ast} \cdot \theta_{\ast} \simeq s_{\ast}$ and $\theta_{\ast} \cdot i_{\ast} \simeq s_{\ast}$, where $s_{\ast}$ is defined by the shift operator $(s,[x]) \mapsto (s+2r,[x])$ on $Y$ and $X$, respectively. These homotopies are ``bounded'' (or controlled) by the number $2r$.

In good circumstances, a branch point $(s,[x])$ is the maximal branch point below the shift $(s+2r,[x])$, and the inequalities defining the homotopies of the last paragraph become equalities in that case.
\medskip

In summary, the poset morphism $\Br_{k}(X) \to \Br_{k}(Y)$ that is induced by an inclusion of data sets $X \subset Y$ has a homotopy theoretic character, and is measurably close to a homotopy equivalence if every sufficiently large group of distinct points of $Y$ is close to a corresponding group of distinct points for the smaller data set $X$. Such a statement amounts to a stability result for hierarchies of branch points, albeit not in traditional terms.

\tableofcontents


\section{Homotopy types}

Suppose given (finite) data sets $X \subset Y \subset \mathbb{R}^{n}$. Suppose that $r > 0$.

Say that $X$ is {\bf $r$-dense} in $Y$, if for all $y \in Y$ there is an $x \in X$ such that $d(y,x) < r$.
\medskip

Suppose that $s \geq 0$. Recall that $V_{s}(X)$ is the simplicial complex with simplices $(x_{0}, \dots ,x_{n})$ with $x_{i} \in X$ and $d(x_{i},x_{j}) \leq s$.

Then we have
\begin{equation*}
  X = V_{0}(X) \subset V_{s}(X) \subset V_{t}(X) \subset \dots \subset V_{R}(X) = \Delta^{X}
\end{equation*}
for $0 \leq s < t \leq R$, and for $R$ sufficiently large, where $\Delta^{X} := \Delta^{N}$ and $N = \vert X \vert -1$.

The inclusion $i: X \subset Y$ induces a map of systems of simplicial complexes
$i: V_{s}(X) \subset V_{s}(Y)$.

The data sets $X$ and $Y$ are finite, so there is a finite string of parameter values
\begin{equation*}
  0=s_{0} < s_{1} < \dots < s_{r}, 
\end{equation*}
consisting of the distances between elements of $Y$. This includes the list of distances between elements of $X$. I say that the $s_{i}$ are the {\bf phase-change} numbers.

\begin{theorem}\label{th 1x}
Suppose that $X \subset Y \subset \mathbb{R}^{n}$, and that $X$ is $r$-dense in $Y$.Then there is a homotopy commutative diagram
    \begin{equation*}
      \xymatrix{
        V_{s}(X) \ar[r]^{j} \ar[d]_{i} & V_{s+2r}(X) \ar[d]^{i} \\
        V_{s}(Y) \ar[r]_{j} \ar[ur]^{\theta} & V_{s+2r}(Y)
      }
      \end{equation*}
    in which the upper triangle commutes.
\end{theorem}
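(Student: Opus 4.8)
The plan is to construct the diagonal simplicial map $\theta\colon V_{s}(Y) \to V_{s+2r}(X)$ directly from the $r$-density hypothesis, and then to verify the two triangles separately: the upper one strictly, and the lower one up to a contiguity (hence genuine) homotopy. First I would fix, for each vertex $y \in Y$, a choice of vertex $\theta(y) \in X$ with $d(y,\theta(y)) < r$, which exists by $r$-density; crucially I would arrange this choice so that $\theta(x) = x$ for every $x \in X$, which is legitimate since $d(x,x) = 0 < r$. To see that $\theta$ is simplicial, suppose $(y_{0},\dots,y_{m})$ is a simplex of $V_{s}(Y)$, so that $d(y_{i},y_{j}) \leq s$. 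The triangle inequality gives
\begin{equation*}
  d(\theta(y_{i}),\theta(y_{j})) \leq d(\theta(y_{i}),y_{i}) + d(y_{i},y_{j}) + d(y_{j},\theta(y_{j})) < r + s + r = s + 2r,
\end{equation*}
so $(\theta(y_{0}),\dots,\theta(y_{m}))$ is a simplex of $V_{s+2r}(X)$ and $\theta$ is well defined.

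The upper triangle then commutes on the nose: for each $x \in X$ we have $\theta(i(x)) = \theta(x) = x = j(x)$, so $\theta \circ i = j$ as simplicial maps $V_{s}(X) \to V_{s+2r}(X)$. This is exactly where the choice $\theta|_{X} = \mathrm{id}$ is used.

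For the lower triangle I would show that the simplicial maps $i \circ \theta$ and $j$ from $V_{s}(Y)$ to $V_{s+2r}(Y)$ are contiguous, which forces their realizations to be homotopic. Concretely, given a simplex $(y_{0},\dots,y_{m})$ of $V_{s}(Y)$, I must check that the combined vertex set $\{y_{0},\dots,y_{m},\theta(y_{0}),\dots,\theta(y_{m})\}$ spans a simplex of $V_{s+2r}(Y)$, i.e. that all of its pairwise distances are at most $s+2r$. The $y_{i}$--$y_{j}$ distances are $\leq s$; the $\theta(y_{i})$--$\theta(y_{j})$ distances are $< s+2r$ by the estimate above; and the mixed distances satisfy $d(y_{i},\theta(y_{j})) \leq d(y_{i},y_{j}) + d(y_{j},\theta(y_{j})) < s + r \leq s+2r$. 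Hence the union is a simplex, the two maps are contiguous, and $i\theta \simeq j$, which is the homotopy commutativity of the lower triangle.

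I do not expect a serious obstacle here: the content is entirely in the elementary distance bookkeeping, packaged by the three triangle-inequality estimates and the standard fact that contiguous simplicial maps have homotopic realizations. The only point demanding attention is arranging $\theta$ to fix $X$ pointwise, so that the upper triangle is \emph{strictly} commutative rather than merely homotopy commutative as the statement requires. If one prefers to avoid invoking contiguity as a black box, the homotopy $i\theta \simeq j$ can instead be produced explicitly on the prism over $V_{s}(Y)$, which is the mild use of subdivision methods hinted at in the introduction.
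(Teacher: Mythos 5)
Your proposal is correct and follows essentially the same route as the paper: the same construction of $\theta$ fixing $X$ pointwise, the same three triangle-inequality estimates, and the same observation that $(y_{0},\dots,y_{m},\theta(y_{0}),\dots,\theta(y_{m}))$ spans a simplex of $V_{s+2r}(Y)$. The paper merely packages the final step as a zig-zag of natural transformations $j \to \gamma \leftarrow i\cdot\theta$ on the poset of non-degenerate simplices, which is exactly the standard proof of the contiguity fact you invoke.
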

 
\begin{proof}
            Define a function $\theta: Y \to X$ by specifying $\theta(x) = x$ for $x \in X$. For $y \in Y-X$ find $x \in X$ such that $d(x,y) < r$ and set $\theta(y) =x$.

        If $(y_{0}, \dots ,y_{n})$ is a simplex of $V_{s}(Y)$,
        \begin{equation*}
          d(\theta(y_{i}),\theta(y_{j})) \leq d(\theta(y_{i}),y_{i})
          + d(y_{i},y_{j}) + d(y_{j},\theta(y_{j})) \leq r + s + r.
          \end{equation*}
        It follows that $\theta$ induces a simplicial complex map $\theta: V_{s}(Y) \to V_{s+2r}(X)$, such that the upper triangle commutes.

        For the simplex $(y_{0}, \dots ,y_{n})$ of $V_{s}(Y)$, the string of elements
        \begin{equation*}
          (y_{0}, \dots,y_{n},\theta(y_{0}), \dots ,\theta(y_{n}))
        \end{equation*}
        defines a simplex of $V_{s+2r}(Y)$, since
        \begin{equation*}
          d(y_{i},\theta(y_{j}) \leq d(y_{i},y_{j}) + d(y_{j},\theta(y_{j}) \leq s+r.
        \end{equation*}
        Set
        \begin{equation*}
          \gamma(y_{0}, \dots ,y_{n}) = (y_{0}, \dots,y_{n},\theta(y_{0}), \dots ,\theta(y_{n})).
        \end{equation*}
        This assignment defines a morphism $\gamma: NV_{s}(Y) \to NV_{s+2r}(Y)$ of posets of non-degenerate simplices, and there are homotopies (natural transformations)
        \begin{equation*}
          j \to  \gamma \leftarrow i\cdot\theta
        \end{equation*}
        which are defined by face inclusions.
\end{proof}

        \begin{corollary}\label{cor 2x}
    Suppose that $X \subset Y \subset \mathbb{R}^{n}$, and that $X$ is $r$-dense in $Y$. Suppose that $2r < s_{i+1}-s_{i}$. Then the map
    \begin{equation*}
      i: V_{s_{i}}(X) \to V_{s_{i}}(Y)
    \end{equation*}
    is a weak homotopy equivalence.
\end{corollary}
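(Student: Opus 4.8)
The plan is to specialize the square of Theorem \ref{th 1x} to $s = s_{i}$ and exploit the hypothesis $2r < s_{i+1}-s_{i}$ to force the horizontal comparison maps $j$ to be identities; once that is done, the map $\theta$ furnished by the theorem becomes a two-sided homotopy inverse for $i$.

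First I would record the elementary consequence of the phase-change setup. The simplices of $V_{s}(Y)$ are determined by which pairwise distances among elements of $Y$ are $\leq s$, and all such distances lie in the list $\{s_{0}, \dots, s_{r}\}$. Hence $V_{s}(Y)$ is constant as $s$ ranges over the half-open interval $[s_{i}, s_{i+1})$. Since $2r < s_{i+1}-s_{i}$ we have $s_{i}+2r < s_{i+1}$, so $s_{i}+2r$ still lies in $[s_{i}, s_{i+1})$ and therefore $V_{s_{i}+2r}(Y) = V_{s_{i}}(Y)$, with the inclusion $j$ equal to the identity. The distances between elements of $X$ form a sublist of the phase-change numbers, so the same argument gives $V_{s_{i}+2r}(X) = V_{s_{i}}(X)$ and $j = \mathrm{id}$ there as well.

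Next I would substitute $s = s_{i}$ into the diagram of Theorem \ref{th 1x}. Because both horizontal maps $j$ are now identities, the map $\theta \colon V_{s_{i}}(Y) \to V_{s_{i}+2r}(X) = V_{s_{i}}(X)$ may be read as a candidate homotopy inverse of $i \colon V_{s_{i}}(X) \to V_{s_{i}}(Y)$. Commutativity of the upper triangle gives $\theta \circ i = j = \mathrm{id}_{V_{s_{i}}(X)}$ on the nose, and homotopy commutativity of the lower triangle gives $i \circ \theta \simeq j = \mathrm{id}_{V_{s_{i}}(Y)}$. Thus $i$ and $\theta$ are mutually inverse up to homotopy, so $i$ is a homotopy equivalence and, in particular, a weak homotopy equivalence.

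I do not expect a serious obstacle, since all the substance resides in Theorem \ref{th 1x}. The one point requiring care is the passage between the simplicial-complex maps and the level of the poset of non-degenerate simplices at which the homotopies of Theorem \ref{th 1x} were produced, namely the natural transformations $j \to \gamma \leftarrow i\cdot\theta$ of functors on $NV_{s_{i}}(Y)$. Here I would invoke the standard identification of the realization of a simplicial complex with that of its poset of non-degenerate simplices (barycentric subdivision) to transport the homotopy-inverse relations back to the spaces $V_{s_{i}}(X)$ and $V_{s_{i}}(Y)$, which is what the statement concerns.
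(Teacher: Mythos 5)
Your proposal is correct and follows the same route as the paper: specialize the diagram of Theorem \ref{th 1x} at $s=s_{i}$, observe that the hypothesis $2r < s_{i+1}-s_{i}$ forces both horizontal maps $j$ to be identities because the complexes only change at phase-change numbers, and conclude that $\theta$ is a homotopy inverse to $i$. The paper states this more tersely (concluding that $V_{s_{i}}(X)$ is a deformation retract of $V_{s_{i}}(Y)$), but the substance, including the implicit passage through barycentric subdivision, is identical.
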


\begin{proof}
        In the homotopy commutative diagram
\begin{equation*}
      \xymatrix{
        V_{s_{i}}(X) \ar[r]^{j} \ar[d]_{i} & V_{s_{i}+2r}(X) \ar[d]^{i} \\
        V_{s_{i}}(Y) \ar[r]_{j} \ar[ur]^{\theta} & V_{s_{i}+2r}(Y)
      }
      \end{equation*}
the horizontal morphisms $j$ are isomorphisms (identities), and so $V_{s_{i}}(X)$ is a deformation retract of $V_{s_{i}}(Y)$.
\end{proof}

Corollary \ref{cor 2x} has consequences for both persistent homology and clustering.
\medskip

Suppose that $k$ is a non-negative integer. The Lesnick subcomplex $L_{s,k}(X)$ is the full subcomplex of $V_{s}(X)$ on those vertices $x$ for which there are at least $k$ distinct vertices $x_{i} \ne x$ such that $d(x,x_{i}) \leq s$ \cite{HMc}, \cite{clusters9}, \cite{LW2}.

A simplex $\sigma = (y_{0}, \dots ,y_{n})$ of $V_{s}(X)$ is in $L_{s,k}(X)$ if and only if each vertex $y_{i}$ has at least $k$ distinct neighbours in $V_{s}(X)$ --- this is the meaning of the assertion that $L_{s,k}(X)$ is a full subcomplex of $V_{s}(X)$.

There is an array of subcomplexes
\begin{equation*}
  \xymatrix{
    V_{s}(X) \ar[r] & V_{t}(X) \\
    L_{s,k}(X) \ar[r] \ar[u] & L_{t,k}(X) \ar[u] \\
    L_{s,k+1}(X) \ar[r] \ar[u] & L_{t,k+1}(X) \ar[u]
  }
  \end{equation*}
We have the following observations:
\begin{itemize}
\item[1)] $L_{s,0}(X) = V_{s}(X)$.
\item[2)] $L_{s,k}(X)$ could be empty for small $s$ and large $k$. In general, for $s \leq t$, $L_{s,k}(X)$ and $L_{t,k}(X)$ may not have the same vertices.
  \item[3)] Every inclusion $i: X \subset Y \subset \mathbb{R}^{n}$ induces maps $i: L_{s,k}(X) \to L_{s,k}(Y)$ which are natural in $s$ and $k$.
\end{itemize}

I say that $s$ is a {\bf spatial parameter} and that $k$ is a {\bf density parameter} (also a {\bf valence}, or degree). Lesnick says \cite{LW2} that $\{ L_{s,k}(X) \}$ is the {\bf degree Rips filtration} of the system $\{ V_{s}(X) \}$.

\medskip

Write $X^{k+1}_{dis}$ for the set of $k+1$ distinct points of $X$, and think of it as a subobject of $(\mathbb{R}^{n})^{k+1}$. 

\begin{theorem}\label{th 3x}
        Suppose that $X \subset Y \subset \mathbb{R}^{n}$ and that $X^{k+1}_{dis}$ is $r$-dense in $Y^{k+1}_{dis}$ and that $L_{s,k}(Y) \ne \emptyset$. Then there is a homotopy commutative diagram
        \begin{equation*}
          \xymatrix{
          L_{s,k}(X) \ar[r]^{j} \ar[d]_{i} & L_{s+2r,k}(X) \ar[d]^{i} \\
          L_{s,k}(Y) \ar[r]_{j} \ar[ur]^{\theta} & L_{s+2r,k}(Y) 
          }
        \end{equation*}
        in which the upper triangle commutes in the usual sense.
\end{theorem}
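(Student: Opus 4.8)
The plan is to follow the proof of Theorem \ref{th 1x} closely, with one essential modification: the vertex-level map $\theta$ must now be built so that it lands in the Lesnick subcomplex $L_{s+2r,k}(X)$, and this is exactly where the configuration-space density hypothesis is used. First I would define $\theta$ on vertices. For a vertex $x$ of $L_{s,k}(X)$ set $\theta(x) = x$; such an $x$ has at least $k$ neighbours in $X$ within distance $s$, hence within distance $s+2r$, so $x$ is a vertex of $L_{s+2r,k}(X)$ and the upper triangle will commute on these vertices. For a vertex $y$ of $L_{s,k}(Y)$ that is not already a vertex of $L_{s,k}(X)$, choose $k$ distinct neighbours $y_1,\dots,y_k$ of $y$ in $Y$ with $d(y,y_j) \leq s$; then $(y,y_1,\dots,y_k) \in Y^{k+1}_{dis}$, and the $r$-density of $X^{k+1}_{dis}$ in $Y^{k+1}_{dis}$ produces $(x,x_1,\dots,x_k) \in X^{k+1}_{dis}$ with each coordinate within $r$ of the corresponding coordinate of $(y,y_1,\dots,y_k)$. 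Set $\theta(y) = x$.

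The point of this construction is that $\theta(y)=x$ automatically acquires enough neighbours in $X$: from
\begin{equation*}
d(x,x_j) \leq d(x,y) + d(y,y_j) + d(y_j,x_j) < r + s + r = s+2r
\end{equation*}
together with the distinctness of $x,x_1,\dots,x_k$, the point $x$ has at least $k$ distinct neighbours in $X$ within distance $s+2r$, so $\theta(y)$ is a vertex of $L_{s+2r,k}(X)$. In every case $d(\theta(y),y) < r$, so, exactly as in Theorem \ref{th 1x}, a simplex $(y_0,\dots,y_n)$ of $L_{s,k}(Y)$ satisfies $d(\theta(y_i),\theta(y_j)) < s+2r$. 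Since $L_{s+2r,k}(X)$ is a \emph{full} subcomplex of $V_{s+2r}(X)$ and all the $\theta(y_i)$ are vertices of it, the tuple $(\theta(y_0),\dots,\theta(y_n))$ spans a simplex there. Thus $\theta$ is a simplicial map $L_{s,k}(Y) \to L_{s+2r,k}(X)$, and by construction $\theta \cdot i = j$ on $L_{s,k}(X)$, giving the commuting upper triangle.

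For the homotopy commutativity of the lower triangle I would reuse the interleaving device of Theorem \ref{th 1x}: define
\begin{equation*}
\gamma(y_0,\dots,y_n) = (y_0,\dots,y_n,\theta(y_0),\dots,\theta(y_n)).
\end{equation*}
The distance estimates $d(y_i,y_j)\leq s$, $d(y_i,\theta(y_j)) < s+r$, and $d(\theta(y_i),\theta(y_j)) < s+2r$ show this is a simplex of $V_{s+2r}(Y)$. The new check is the valence condition: each $y_i$ already has $k$ neighbours in $Y$ within $s \leq s+2r$, while each $\theta(y_i)$ has $k$ neighbours in $X \subseteq Y$ within $s+2r$ by the previous paragraph. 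Hence $\gamma$ takes values in $L_{s+2r,k}(Y)$ and defines a poset morphism $N L_{s,k}(Y) \to N L_{s+2r,k}(Y)$; the two face inclusions yield natural transformations $j \to \gamma \leftarrow i\cdot\theta$, and therefore a homotopy $j \simeq i\cdot\theta$.

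The main obstacle, and the reason the hypothesis is phrased on the configuration spaces $Y^{k+1}_{dis}$ rather than on $Y$ itself, is precisely this valence requirement: plain $r$-density of points (as in Theorem \ref{th 1x}) would move each vertex to a nearby point of $X$ but give no control over how many $X$-neighbours that point has, so the image could fail to be a vertex of \emph{any} Lesnick complex. Moving an entire witnessing configuration $(y,y_1,\dots,y_k)$ at once is what guarantees that $\theta(y)$ inherits $k$ nearby companions inside $X$. The remaining bookkeeping — distinctness of the chosen neighbours, monotonicity of $\gamma$ under face maps, and the reading of the natural transformations as homotopies of the associated classifying-space maps — is routine and identical to the $k=0$ case already established.
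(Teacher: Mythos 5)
Your proposal is correct and follows essentially the same route as the paper: define $\theta$ on a vertex $y$ by transporting a whole witnessing configuration $(y,y_1,\dots,y_k)$ to a nearby element of $X^{k+1}_{dis}$, verify the valence condition via the triangle inequality, and then reuse the $\gamma$-interleaving of Theorem \ref{th 1x} to produce the homotopies $j \to \gamma \leftarrow i\cdot\theta$. If anything, you supply more detail than the paper does (the paper ends with ``finish according to the method of proof for Theorem \ref{th 1x}''), in particular the explicit checks that fullness of $L_{s+2r,k}(X)$ makes the image tuple a simplex and that $\gamma$ lands in $L_{s+2r,k}(Y)$.
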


\begin{proof}
            Suppose that $y \in L_{s,k}(X)_{0} - L_{s,k}(X)_{0}$. Then there are $k$ points $y_{1},\dots ,y_{k}$ of $Y$, distinct from $y$ such that $d(y,y_{i}) < s$. There is a $(k+1)$-tuple $(x_{0},x_{1}, \dots ,x_{k})$ such that
            \begin{equation*}
              d((x_{0}, \dots ,x_{k}),(y,y_{1}, \dots ,y_{k})) < r,
            \end{equation*}
            by assumption. Then $d(y,x_{0}) <r$, $d(y_{i},x_{i}) < r$, and so $d(x_{0},x_{i}) < s+2r$, so that $x_{0} \in L_{s+2r,k}(X)$.  Set $\theta(y) = x_{0}$, and observe that $d(y,\theta(y)) < r$.

            If $(y_{0}, \dots ,y_{p})$ is a simplex of $L_{s,k}(Y)$ then $(\theta(y_{0}), \dots ,\theta(y_{p})$ is a simplex of $L_{s+2r,k}(X)$, as is the string
              \begin{equation*}
                (y_{0}, \dots ,y_{p},\theta(y_{0}), \dots ,\theta(y_{p})).
              \end{equation*}
              Finish according to the method of proof for Theorem \ref{th 1x}.
\end{proof}

\begin{corollary}\label{cor 4x}
  Suppose that $X \subset Y \subset \mathbb{R}^{n}$ and that $X^{k+1}_{dis}$ is $r$-dense in $Y^{k+1}_{dis}$. Suppose that $2r < s_{i+1}-s_{i}$. Then the inclusion
$i: L_{s_{i},k}(X) \to L_{s_{i},k}(Y)$ is a weak homotopy equivalence.
\end{corollary}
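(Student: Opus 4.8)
The plan is to mirror the proof of Corollary \ref{cor 2x}, but with Theorem \ref{th 3x} in place of Theorem \ref{th 1x}. Thus I would start from the homotopy commutative square of Theorem \ref{th 3x}, whose upper triangle gives $\theta \cdot i = j$ over $X$ and whose lower triangle gives $i \cdot \theta \simeq j$ over $Y$, where in each case $j$ denotes the horizontal ``shift'' map $L_{s_{i},k}(-) \to L_{s_{i}+2r,k}(-)$. If both of these shift maps are identities, then $\theta \cdot i = \mathrm{id}$ and $i \cdot \theta \simeq \mathrm{id}$, so that $L_{s_{i},k}(X)$ is a deformation retract of $L_{s_{i},k}(Y)$ and $i$ is a weak homotopy equivalence. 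The whole corollary therefore reduces to showing that the two shift maps $j$ are identities.

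This reduction is where the Lesnick case demands more care than the Vietoris--Rips case, because, as observation 2) records, the vertex set of $L_{s,k}$ can genuinely vary with $s$. The key point I would emphasise is that \emph{both} the simplex condition in $V_{s}$ and the valence condition selecting the vertices of $L_{s,k}$ are thresholded by inequalities of the shape $d(\cdot,\cdot) \leq s$, whose truth changes only as $s$ crosses a distance realised in $Y$, i.e.\ a phase-change number. The hypothesis $2r < s_{i+1}-s_{i}$ yields $s_{i}+2r < s_{i+1}$, so the interval $(s_{i},s_{i}+2r]$ contains no phase-change number. Hence, for every pair of points of $Y$ (and so of $X$), ``within distance $s$'' holds at $s=s_{i}$ exactly when it holds at $s=s_{i}+2r$. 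Consequently each vertex retains precisely its set of neighbours, the set of vertices of valence $\geq k$ is unchanged, and the full subcomplex they span is unchanged. This gives $L_{s_{i},k}(X)=L_{s_{i}+2r,k}(X)$ and $L_{s_{i},k}(Y)=L_{s_{i}+2r,k}(Y)$, so the maps $j$ are identities.

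One loose end remains: Theorem \ref{th 3x} assumes $L_{s,k}(Y)\neq\emptyset$. If $L_{s_{i},k}(Y)=\emptyset$ then $L_{s_{i},k}(X)=\emptyset$ as well, since a vertex of $L_{s_{i},k}(X)$ keeps its $k$ neighbours inside the larger set $Y$ and hence lies in $L_{s_{i},k}(Y)$; in that degenerate case $i$ is trivially a weak equivalence. Otherwise Theorem \ref{th 3x} applies and the argument above finishes the proof.

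The step I expect to be the main obstacle is the vertex-set stability in the second paragraph: one must confirm that freezing the parameter across a phase-change gap freezes not only which tuples span simplices but also the neighbour count that distinguishes the Lesnick vertices. Once both conditions are seen to be controlled by the same family of distances, the stability --- and with it the whole corollary --- is immediate.
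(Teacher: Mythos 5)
Your proof is correct and follows the route the paper intends: Corollary \ref{cor 4x} is stated without an explicit proof, the expected argument being exactly the analogue of the proof of Corollary \ref{cor 2x} with Theorem \ref{th 3x} in place of Theorem \ref{th 1x}, namely that $2r < s_{i+1}-s_{i}$ forces the horizontal shift maps $j$ to be identities so that $L_{s_{i},k}(X)$ is a deformation retract of $L_{s_{i},k}(Y)$. Your two added checks --- that the valence condition selecting the Lesnick vertices is also frozen across the phase-change gap, and that the case $L_{s_{i},k}(Y)=\emptyset$ is degenerate because then $L_{s_{i},k}(X)=\emptyset$ as well --- are details the paper glosses over, and you handle them correctly.
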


\begin{lemma}
    Suppose that $X^{k+1}_{dis}$ is $r$-dense in $Y^{k+1}_{dis}$, and that $Y^{k+1}_{dis} \ne \emptyset$. Then $X^{k}_{dis}$ is $r$-dense in $Y^{k}_{dis}$.
\end{lemma}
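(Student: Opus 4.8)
The plan is to deduce $k$-density from the $(k+1)$-density hypothesis by the obvious device: extend a given $k$-tuple of distinct points of $Y$ to a $(k+1)$-tuple of distinct points, apply the assumption, and then truncate the resulting $X$-tuple back down. The nonemptiness hypothesis $Y^{k+1}_{dis} \ne \emptyset$ is precisely what licenses the extension step, and the monotonicity of the product metric under forgetting a coordinate block is what makes the truncation harmless.

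First I would record that $Y^{k+1}_{dis} \ne \emptyset$ forces $\vert Y \vert \geq k+1$. Consequently, given any point $(y_{0}, \dots ,y_{k-1}) \in Y^{k}_{dis}$, there is at least one further point $y_{k} \in Y$ distinct from $y_{0}, \dots ,y_{k-1}$, so that $(y_{0}, \dots ,y_{k-1}, y_{k})$ is an element of $Y^{k+1}_{dis}$. Applying the assumed $r$-density of $X^{k+1}_{dis}$ in $Y^{k+1}_{dis}$ to this extended tuple then yields a $(k+1)$-tuple $(x_{0}, \dots ,x_{k}) \in X^{k+1}_{dis}$ with
\[
  d((x_{0}, \dots ,x_{k}),(y_{0}, \dots ,y_{k-1},y_{k})) < r
\]
in $\mathbb{R}^{n(k+1)}$.

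Discarding the last coordinate block, the points $x_{0}, \dots ,x_{k-1}$ remain distinct (distinctness of a tuple is inherited by every subtuple), so $(x_{0}, \dots ,x_{k-1}) \in X^{k}_{dis}$. Because the square of the metric on $\mathbb{R}^{n(k+1)}$ is the sum of the squares of the coordinatewise distances in $\mathbb{R}^{n}$, omitting the final $\mathbb{R}^{n}$-factor can only decrease the distance, and hence
\[
  d((x_{0}, \dots ,x_{k-1}),(y_{0}, \dots ,y_{k-1})) \leq d((x_{0}, \dots ,x_{k}),(y_{0}, \dots ,y_{k-1},y_{k})) < r,
\]
which is exactly the required estimate. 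Since $(y_{0}, \dots ,y_{k-1})$ was arbitrary, $X^{k}_{dis}$ is $r$-dense in $Y^{k}_{dis}$.

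The only genuine content sits in the first step: without the assumption $Y^{k+1}_{dis} \ne \emptyset$ there need be no room to adjoin a fresh distinct coordinate $y_{k}$, and the argument would fail for a $Y$ with exactly $k$ points (where $Y^{k}_{dis}$ is nonempty but $Y^{k+1}_{dis}$ is empty). I expect this existence-of-an-extension point, rather than the metric inequality, to be the step worth stating with care; everything after it follows at once from the product-metric estimate.
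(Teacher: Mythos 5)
Your proof is correct and follows essentially the same route as the paper's: extend the $k$-tuple to a $(k+1)$-tuple using $Y^{k+1}_{dis}\ne\emptyset$, apply the density hypothesis, and truncate, using that dropping a coordinate block does not increase the product-metric distance. Your additional remarks on why the nonemptiness hypothesis is needed are accurate but go beyond what the paper records.
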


\begin{proof}
    Suppose that $\{ y_{0}, \dots ,y_{k-1} \}$ is a set of $k$ distinct points of $Y$. Then there is a $y_{k} \in Y$ which is distinct from the $y_{i}$, so that $(y_{0},y_{1}, \dots ,y_{k})$ is a $(k+1)$-tuple of distinct points of $Y$. There is a $(k+1)$-tuple $(x_{0},\dots ,x_{k})$ of distinct points of $X$ such that
    \begin{equation*}
      d((y_{0}, \dots ,y_{k-1},y_{k}),(x_{0}, \dots ,x_{k-1},x_{k})) < r.
      \end{equation*}
        It follows that
    \begin{equation*}
      d((y_{0}, \dots ,y_{k-1}),(x_{0}, \dots ,x_{k-1})) < r.
    \end{equation*}
\end{proof}    

\begin{corollary}
        Suppose that $X \subset Y \subset \mathbb{R}^{n}$ and that $X^{k+1}_{dis}$ is $r$-dense in $Y^{k+1}_{dis}$. Suppose that $2r < s_{i+1}-s_{i}$. Then the inclusion
$i: L_{s_{i},p}(X) \to L_{s_{i},p}(Y)$ is a weak homotopy equivalence for $0 \leq p \leq k$.
\end{corollary}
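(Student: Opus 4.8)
The plan is to reduce the statement to Corollary \ref{cor 4x} by descending on the valence. Corollary \ref{cor 4x} is precisely the case $p = k$, so it suffices to supply, for each $p$ with $0 \leq p \leq k$, the single hypothesis that makes Corollary \ref{cor 4x} applicable with density parameter $p$ in place of $k$: namely that $X^{p+1}_{dis}$ is $r$-dense in $Y^{p+1}_{dis}$. The spacing condition $2r < s_{i+1}-s_{i}$ does not involve the valence and is assumed outright, so it is available at every level.

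First I would iterate the Lemma immediately preceding the statement. That Lemma says that if $X^{m+1}_{dis}$ is $r$-dense in $Y^{m+1}_{dis}$ and $Y^{m+1}_{dis} \neq \emptyset$, then $X^{m}_{dis}$ is $r$-dense in $Y^{m}_{dis}$. Starting from the level-$(k+1)$ hypothesis and applying the Lemma repeatedly, a downward induction on $m$ gives that $X^{j}_{dis}$ is $r$-dense in $Y^{j}_{dis}$ for every $j$ with $1 \leq j \leq k+1$; in particular $X^{p+1}_{dis}$ is $r$-dense in $Y^{p+1}_{dis}$ for each $p$ in the required range. For each such $p$ I would then invoke Corollary \ref{cor 4x} at density parameter $p$: with the two hypotheses now in hand, the inclusion $i: L_{s_{i},p}(X) \to L_{s_{i},p}(Y)$ is a weak homotopy equivalence, as claimed. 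At the bottom, $p=0$, this recovers Corollary \ref{cor 2x} through the identifications $L_{s,0}=V_{s}$ and $X^{1}_{dis}=X$, $Y^{1}_{dis}=Y$.

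The one point requiring care, and the main obstacle, is the nonemptiness hypothesis of the Lemma, which must hold at each intermediate stage of the descent. Here I would observe that $Y^{k+1}_{dis} \neq \emptyset$ forces $Y^{m}_{dis} \neq \emptyset$ for all $m \leq k+1$, since a set of $k+1$ distinct points of $Y$ contains subsets of every smaller cardinality; thus the single standing assumption $Y^{k+1}_{dis} \neq \emptyset$ licenses the entire induction. This assumption is genuinely needed: if $Y^{k+1}_{dis} = \emptyset$ the level-$(k+1)$ density hypothesis is vacuous and transmits no information to lower valences, and one can exhibit $X \subset Y$ for which $i: L_{s_{i},p}(X) \to L_{s_{i},p}(Y)$ fails to be a weak equivalence for some $p < k$ (for example $X$ a single point inside a two-point set $Y$, with $k=2$ and $p=1$, where $L_{s_{i},1}(X) = \emptyset$ while $L_{s_{i},1}(Y)$ is a contractible edge). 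I would therefore carry $Y^{k+1}_{dis} \neq \emptyset$ as part of the hypotheses, consistent with the nonemptiness assumption already present in Theorem \ref{th 3x}.
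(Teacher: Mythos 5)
Your argument is exactly the intended one: the paper states this corollary without proof, immediately after the lemma on descent of $r$-density, and the implicit argument is precisely your downward iteration of that lemma followed by an application of Corollary \ref{cor 4x} at each density parameter $p$. Your observation that $Y^{k+1}_{dis} \neq \emptyset$ must be carried as a standing hypothesis for the descent to start (together with your counterexample showing the conclusion can fail without it) is a legitimate point that the paper elides, and is worth recording.
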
    

\section{Branch points and upper bounds}

Fix the density $k$ and suppose that $L_{s,k}(X) \ne \emptyset$ for $s$ sufficiently large. Apply the path component functor to the $L_{s,k}(X)$, to get a diagram of functions
\begin{equation*}
  \dots \to \pi_{0}L_{s,k}(X) \to \pi_{0}L_{t,k} \to \dots
\end{equation*}

There is a graph $\Gamma_{k}(X) := \Gamma(\pi_{0}L_{\ast,k}(X))$ with vertices $(s,[x])$ with $[x] \in \pi_{0}L_{s,k}(X)$, and edges $(s,[x]) \to (t,[x])$ with $s \leq t$. This graph underlies a contractible poset, and is therefore a tree (or hierarchy).

To reflect the poset structure of $\Gamma_{k}(X)$, I write the morphisms of $\Gamma_{k}(X)$ as relations $(s,[x]) \leq (t,[y])$. The existence of such a relation means that  $[x] = [y] \in \pi_{0}L_{t,k}(X)$, or that the image of $[x] \in \pi_{0}L_{s,k}(X)$ under the induced function $\pi_{0}L_{s,k}(X) \to \pi_{0}L_{t,k}(X)$ is $[y]$.
\medskip

\noindent
    {\bf Remarks}:\ 1)\ Partitions of $X$ given by the set $\pi_{0}V_{s}(X)$ are standard {\bf clusters}. The tree $\Gamma_{0}(X)=\Gamma(V_{\ast}(X))$ defines a {\bf hierarchical clustering} (similar to, but not the same as single linkage clustering).
    \smallskip

    \noindent
    2)\ The set $\pi_{0}L_{s,k}(X)$ gives a partitioning of the set of elements of $X$ having at least $k$ neighbours of distance $\leq s$, which is the subject of the {\bf DBSCAN} algorithm. The tree $\Gamma_{k}(X)=\Gamma(\pi_{0}L_{\ast,k}(X))$ is the basis of the {\bf HDBSCAN} algorithm.
    \medskip

    A {\it branch point} in the tree $\Gamma_{k}(X)$ is a vertex $(t,[x])$ such that either of following two conditions hold:
    \begin{itemize}
    \item[1)] there is an $s_{0} < t$ such that for all $s_{0} \leq s < t$ there are distinct vertices $(s,[x_{0}])$ and $(s,[x_{1}])$ with $(s,[x_{0}]) \leq (t,[x])$ and $(s,[x_{1}]) \leq (t,[x])$, or
    \item[2)] there is no relation $(s,[y]) \leq (t,[x])$ with $s < t$.
\end{itemize}
The second condition means that a representing vertex $x$ is not a vertex of $L_{s,k}(X)$ for $s < t$.    
Write $\Br_{k}(X)$ for the set of branch points $(s,[x])$ in $\Gamma_{k}(X)$.

Every branch point $(s,[x])$ of $\Gamma_{k}(X)$ has $s=s_{i}$, where $s_{i}$ is a phase change number for $X$.

The branch point poset $\Br_{k}(X)$ is a tree, because the element $(s_{r},[x])$ corresponding to the highest phase change number $s_{r}$ is maximal.

The set of branch points $\Br_{k}(X)$ inherits a partial ordering from the poset $\Gamma_{k}(X)$, and the inclusion $\Br_{k}(X) \subset \Gamma_{k}(X)$ of the set of branch points defines a monomorphism of trees.

\medskip

Suppose that $(s,[x])$ and $(t,[y])$ are vertices of the graph $\Gamma_{k}(X)$. There is a vertex $(v,[w])$ such that $(s,[x]) \leq (v,[w])$ and $(t,[y]) \leq (v,[w])$. The two relations mean, among other things, that $[x]=[z]=[y]$ in $\pi_{0}L_{v,k}(X)$.

It follows that there is a unique smallest vertex $(u,[z])$ which is an upper bound for both $(s,[x])$ and $(t,[y])$. The number $u$ is the smallest parameter $v$ such that $[x]=[y]$ in $\pi_{0}L_{u,k}(X)$, and so $[z]=[x]=[y]$.

I write
\begin{equation*}
  (s,[x]) \cup (t,[y]) = (u,[z]).
  \end{equation*}
The vertex $(u,[z])$ is the {\bf least upper bound} (or join) of $(s,[x])$ and $(t,[y])$.

Every finite collection of points $(s_{1},[x_{1}]), \dots ,(s_{p},[x_{p}])$ has a least upper bound
\begin{equation*}
  (s_{1},[x_{1}]) \cup \dots \cup (s_{p},[x_{p}])
\end{equation*}
in $\Gamma_{k}(X)$.

\begin{lemma}\label{lem 7x}
  The least upper bound of branch points $(u,[z])$ of $(s,[x])$ and $(t,[y])$ is a branch point.
\end{lemma}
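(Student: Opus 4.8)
The plan is to verify directly that the least upper bound $(u,[z]) = (s,[x]) \cup (t,[y])$ satisfies one of the two defining conditions for a branch point, distinguishing whether the join is attained at one of the two given vertices or lies strictly above both. Throughout I would use the characterization of $u$ as the smallest parameter at which the images of $[x]$ and $[y]$ agree in $\pi_{0}L_{u,k}(X)$, this common image being $[z]$.

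First I would dispose of the degenerate case. If $u = s$ then $(u,[z]) = (s,[x])$, and if $u = t$ then $(u,[z]) = (t,[y])$; in either situation the join coincides with one of the two hypothesised branch points, and there is nothing further to prove. This is the only place where the branch point hypothesis on the inputs $(s,[x])$ and $(t,[y])$ is actually needed, since an arbitrary vertex lying above $(t,[y])$ need not be a branch point.

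The main case is $s < u$ and $t < u$, so that $(s,[x])$ and $(t,[y])$ both lie strictly below $(u,[z])$. Here I would set $s_{0} = \max(s,t)$, note that $s_{0} < u$, and for each $v$ with $s_{0} \leq v < u$ track the image $[x]_{v} \in \pi_{0}L_{v,k}(X)$ of $[x]$ together with the image $[y]_{v}$ of $[y]$ up to level $v$. Both are defined because $v \geq s$ and $v \geq t$, and both satisfy $(v,[x]_{v}) \leq (u,[z])$ and $(v,[y]_{v}) \leq (u,[z])$ by functoriality of the component maps $\pi_{0}L_{v,k}(X) \to \pi_{0}L_{u,k}(X)$. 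The crucial point is that $[x]_{v} \neq [y]_{v}$: since $v < u$ and $u$ is by definition the smallest parameter at which the images of $[x]$ and $[y]$ coincide, the two classes must remain distinct below level $u$. Thus $(v,[x]_{v})$ and $(v,[y]_{v})$ are distinct vertices, both below $(u,[z])$, for every $s_{0} \leq v < u$, which is precisely condition 1) in the definition of a branch point applied to $(u,[z])$.

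The argument involves no estimates, and the step I would take most care over is the distinctness of the tracked classes $[x]_{v}$ and $[y]_{v}$ below level $u$. This is immediate from the minimality built into the definition of the least upper bound, once one records the monotonicity observation that two path components which coincide at some level continue to coincide at all higher levels; hence the first level of coincidence is genuinely the threshold $u$, and $[x]_{v}$ separates from $[y]_{v}$ for all $v < u$.
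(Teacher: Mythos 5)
Your proof is correct and follows essentially the same route as the paper's: the degenerate case $u=s$ or $u=t$ reduces to one of the given branch points, and otherwise the minimality of $u$ forces the images of $[x]$ and $[y]$ to remain distinct at every level $v$ with $\max(s,t)\leq v<u$, which is condition 1) of the definition. You have simply written out in full the two sentences the paper compresses this into.
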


\begin{proof}
  For numbers $s,t<v<u$, $(v,[x])$ and $(v,[y])$ are distinct, because $(u,[z])$ is a least upper bound.

  Otherwise, $s=u$ or $t=u$, in which case $(u,[z]) = (s,[x])$ or $(u,[z])=(t,[y])$, respectively.
  \end{proof}

It follows from Lemma \ref{lem 7x} that any two branch points $(s,[x])$ and $(t,[y])$ have a least upper bound in $\Br_{k}(X)$, and that the poset inclusion $\alpha: \Br_{k}(X) \to \Gamma_{k}(X)$ preserves least upper bounds.
\medskip

\noindent
{\bf Remark}:\ The poset $\Gamma_{k}(X)$ also has greatest lower bounds (or meets). The greatest lower bound
\begin{equation*}
  (t_{1},[y_{1}]) \cap \dots \cap (t_{r},[y_{r}])
\end{equation*}
  is the least upper bound of all $(s,[x])$ such that $(s,[x]) \leq (t_{j},[y_{j}])$ for all $j$.
  \medskip
  
  We have the following triviality:

  \begin{lemma}\label{lem 8x}
    Suppose that $(s_{1},[x_{1}]), (s_{2},[x_{2}])$ and $(s_{3},[x_{3}])$ are vertices of $\Gamma_{k}(X)$. Then
    \begin{equation*}
      (s_{1},[x_{1}]) \cup (s_{3},[x_{3}]) \leq ((s_{1},[x_{1}]) \cup (s_{2},[x_{2}])) \cup ((s_{2},[x_{2}]) \cup (s_{3},[x_{3}])).
      \end{equation*}
  \end{lemma}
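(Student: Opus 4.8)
The plan is to exploit only the universal property of the least upper bound that was established in the discussion preceding Lemma \ref{lem 7x}, together with transitivity of the partial order on $\Gamma_{k}(X)$. No feature of the metric, or of the path-component functions, is needed; this is exactly why the author calls the statement a triviality. I would begin by abbreviating the three vertices as $a = (s_{1},[x_{1}])$, $b = (s_{2},[x_{2}])$, and $c = (s_{3},[x_{3}])$, and by naming the right-hand side $P := (a \cup b) \cup (b \cup c)$.

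Next I would record the defining inequalities for the joins involved. By definition of $a \cup b$ as an upper bound of its two arguments, we have $a \leq a \cup b$; by definition of $b \cup c$, we have $c \leq b \cup c$; and by definition of $P$ as an upper bound for the pair $\{\, a \cup b,\ b \cup c\,\}$, we have both $a \cup b \leq P$ and $b \cup c \leq P$.

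Then, using transitivity, I would chain these to obtain $a \leq a \cup b \leq P$ and $c \leq b \cup c \leq P$, so that $P$ is an upper bound for the pair $\{\, a, c\,\}$. Since $a \cup c$ is by definition the \emph{least} upper bound of this pair, the desired relation $a \cup c \leq P$ follows at once.

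The only point requiring care — and it is bookkeeping rather than a genuine obstacle — is to ensure the argument invokes nothing beyond the lattice structure already in place. The middle vertex $b$ serves purely as a relay, through which $a$ and $c$ are each bounded above by $P$; $b$ itself is never compared directly with $a \cup c$, and the inequality holds in the stated direction precisely because each of $a$ and $c$ appears inside one of the two bracketed joins on the right.
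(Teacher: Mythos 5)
Your proposal is correct and is essentially the argument the paper gives: both rest solely on the universal property of the join, the paper routing through the identity $(a\cup b)\cup(b\cup c)=a\cup b\cup c$ and then observing $a\cup c\leq a\cup b\cup c$, while you bypass that intermediate identity by showing directly that the right-hand side is an upper bound for $a$ and $c$. The two proofs are interchangeable one-liners in the lattice $\Gamma_{k}(X)$.
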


  \begin{proof}
    We have the identity
    \begin{equation*}
      ((s_{1},[x_{1}]) \cup (s_{2},[x_{2}])) \cup ((s_{2},[x_{2}]) \cup (s_{3},[x_{3}])) = (s_{1},[x_{1}]) \cup (s_{2},[x_{2}]) \cup (s_{3},[x_{3}]),
      \end{equation*}
    and then
    \begin{equation*}
      (s_{1},[x_{1}]) \cup (s_{3},[x_{3}]) \leq (s_{1},[x_{1}]) \cup (s_{2},[x_{2}]) \cup (s_{3},[x_{3}]).
      \end{equation*}
    \end{proof}

  Carlsson and M\'emoli \cite{CM-2010B} define an ultrametric $d$ on $X=V_{0}(X)$, for which they say that $d(x,y)=s$, where $s$ is the minimum parameter value such that $[x]=[y] \in
 \pi_{0}V_{s}(X)$.

 Suppose given $[x]$ and $[y]$ in $\pi_{0}L_{s,k}(X)$ (equivalently, points $(s,[x])$ and $(s,[y])$ in $\Gamma_{k}(X)$). Write $d([x],[y]) = u-s$, where $(s,[x]) \cup (s,[y]) = (u,[w])$.

 \begin{lemma}\label{lem 9x}
   Given $[x],[y]$ and $[z]$ in $\pi_{0}L_{s,k}(X)$, we have a relation
   \begin{equation*}
     d([x],[z]) \leq \max \{ d([x],[y]),d([y],[z]) \}.
\end{equation*}
   \end{lemma}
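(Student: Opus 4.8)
The plan is to translate the claimed ultrametric inequality into a statement about the spatial (first) coordinates of least upper bounds, and then obtain it from Lemma~\ref{lem 8x}. Write $(s,[x]) \cup (s,[y]) = (u_{xy},[w_{xy}])$, and similarly $(s,[y]) \cup (s,[z]) = (u_{yz},[w_{yz}])$ and $(s,[x]) \cup (s,[z]) = (u_{xz},[w_{xz}])$, so that by definition $d([x],[y]) = u_{xy}-s$, $d([y],[z]) = u_{yz}-s$ and $d([x],[z]) = u_{xz}-s$. Since the same $s$ is subtracted throughout, the assertion is equivalent to the inequality
\begin{equation*}
  u_{xz} \leq \max \{ u_{xy}, u_{yz} \}
\end{equation*}
on first coordinates.

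First I would identify the first coordinate of the triple join $(s,[x]) \cup (s,[y]) \cup (s,[z])$. By the definition of least upper bound, this number is the smallest $v$ for which $[x]=[y]=[z]$ in $\pi_{0}L_{v,k}(X)$. I claim it equals $\max\{u_{xy},u_{yz}\}$: at the parameter $\max\{u_{xy},u_{yz}\}$ both relations $[x]=[y]$ and $[y]=[z]$ already hold, by monotonicity of the transition functions $\pi_{0}L_{v,k}(X) \to \pi_{0}L_{w,k}(X)$, so all three classes agree there; conversely any $v$ at which all three agree forces $v \geq u_{xy}$ and $v \geq u_{yz}$, since in particular each of the two pairs is merged at level $v$.

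With this in hand I would simply read off the result from Lemma~\ref{lem 8x}, which furnishes the relation
\begin{equation*}
  (s,[x]) \cup (s,[z]) \leq (s,[x]) \cup (s,[y]) \cup (s,[z])
\end{equation*}
in $\Gamma_{k}(X)$. A relation in $\Gamma_{k}(X)$ can only increase the spatial parameter, so comparing first coordinates yields $u_{xz} \leq \max\{u_{xy},u_{yz}\}$, which is the desired inequality after subtracting $s$.

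I expect the only genuinely delicate point to be the computation of the first coordinate of the triple join, that is, the verification that the least parameter at which all three components coincide is exactly $\max\{u_{xy},u_{yz}\}$ rather than something strictly larger. This rests on the monotonicity (functoriality in the spatial parameter) of the path-component maps, which guarantees that once two classes have merged they remain merged at every larger parameter. Everything else is bookkeeping within the least-upper-bound calculus already established in Lemmas~\ref{lem 7x}--\ref{lem 8x}.
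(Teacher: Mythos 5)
Your proposal is correct and follows essentially the same route as the paper: identify the spatial coordinate of the triple join $(s,[x]) \cup (s,[y]) \cup (s,[z])$ as $s + \max\{d([x],[y]),d([y],[z])\}$, then bound $d([x],[z])$ by that quantity via Lemma \ref{lem 8x}. You simply spell out the monotonicity argument for the triple join's first coordinate, which the paper asserts without comment.
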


 \begin{proof}
   Suppose that $(s,[x]) \cup (s,[y]) \cup (s,[z]) = (v,[w])$. Then
   \begin{equation*}
     v -s = \max \{ d([x],[y]),d([y],[z]) \}
   \end{equation*}
   and
   \begin{equation*}
     d([x],[z]) \leq v-s
   \end{equation*}
   by Lemma \ref{lem 8x}.
 \end{proof}

 
 \begin{corollary}\label{cor 10x}
   The function
   \begin{equation*}
     d: \pi_{0}L_{s,k}(X) \times \pi_{0}L_{s,k}(X) \to \mathbb{R}_{\geq 0}
     \end{equation*}
of Lemma \ref{lem 9x} gives the set $\pi_{0}L_{s,k}(X)$ the structure of an ultrametric space.
   \end{corollary}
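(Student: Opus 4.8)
The plan is to verify the three defining axioms of an ultrametric for the function $d$: non-negativity together with non-degeneracy (so that $d([x],[y])=0$ if and only if $[x]=[y]$), symmetry, and the strong triangle inequality. The last of these is exactly the content of Lemma \ref{lem 9x}, so I would simply invoke it and regard that axiom as settled; the remaining work is elementary bookkeeping with the definition of the least upper bound.

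For non-negativity and non-degeneracy, I would start from the definition $d([x],[y]) = u-s$, where $(s,[x]) \cup (s,[y]) = (u,[w])$. Since $(u,[w])$ is an upper bound for $(s,[x])$ in the poset $\Gamma_{k}(X)$, and every relation in $\Gamma_{k}(X)$ can only increase the spatial parameter, we have $s \leq u$ and hence $d([x],[y]) \geq 0$. For the non-degeneracy statement, I would argue both directions: if $[x]=[y]$ in $\pi_{0}L_{s,k}(X)$ then $(s,[x]) = (s,[y])$ is its own least upper bound, so $u=s$ and $d([x],[y])=0$; conversely, if $d([x],[y])=0$ then $u=s$, so that $(s,[x]) \cup (s,[y]) = (s,[w])$, which forces $[x]=[w]=[y]$ already at parameter value $s$.

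Symmetry is immediate, since the join is commutative in its two arguments by construction as a least upper bound: $(s,[x]) \cup (s,[y]) = (s,[y]) \cup (s,[x])$, and therefore $d([x],[y]) = d([y],[x])$.

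I do not anticipate a real obstacle here. The only substantive axiom is the strong triangle inequality, and that has already been proved in Lemma \ref{lem 9x} as a consequence of the associativity/monotonicity of joins recorded in Lemma \ref{lem 8x}; the remaining steps amount to unwinding the definition $d([x],[y]) = u-s$ and using that relations in $\Gamma_{k}(X)$ never decrease the spatial parameter.
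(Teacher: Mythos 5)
Your proposal is correct and matches the paper's (implicit) argument: the paper states Corollary \ref{cor 10x} with no written proof, treating the strong triangle inequality from Lemma \ref{lem 9x} as the only substantive point, exactly as you do. Your additional verification of non-negativity, non-degeneracy, and symmetry from the definition $d([x],[y])=u-s$ is routine and accurate, so nothing further is needed.
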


 \noindent
     {\bf Remark}:\ One could define a ``distance'' function $d$ on the full set of points of $\Gamma_{k}(X)$ by setting
     \begin{equation*}
       d((s,[x]),(t,[y])) = \max \{ u-s,u-t \},
     \end{equation*}
     where $(s,[x]) \cup (t,[y]) = (u,[z])$.

The ultrametric property of Lemma \ref{lem 9x} fails for the points $(s,[x]),(t,[x])$ and $(u,[x])$ where $s < t <u$, since it is not the case that $u-s \leq \max \{ t-s,u-t \}$

\begin{lemma}\label{lem 11x}
Every vertex $(s,[x])$ of $\Gamma_{k}(X)$ has a unique largest branch point $(s_{0},[x_{0}])$ such that $(s_{0},[x_{0}]) \leq (s,[x])$.
\end{lemma}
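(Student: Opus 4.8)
The plan is to realise the desired branch point as a least upper bound, exploiting the calculus developed in Lemmas \ref{lem 7x}--\ref{lem 9x} rather than by a direct descent. Write
\[
  B = \{ (t,[w]) \in \Br_{k}(X) : (t,[w]) \leq (s,[x]) \}
\]
for the set of branch points lying below $(s,[x])$. My claim is that the join $\bigcup B$ of all the elements of $B$, formed in $\Gamma_{k}(X)$, is itself the required largest branch point. Note that $B$ is typically not a chain: two distinct births beneath $(s,[x])$ are incomparable, so one cannot simply take ``the top of a chain'', and the join is the correct tool for locating where the branches under $(s,[x])$ first come together.

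First I would check that $B$ is finite and nonempty. Finiteness is immediate, since every branch point occurs at a phase change number, and there are finitely many phase change numbers and finitely many path components at each level. For nonemptiness, the component $[x]$ first appears at some smallest phase change number $t \leq s$; choosing an ancestor vertex $(t,[w]) \leq (s,[x])$ at that minimal level, minimality forces that there is no $(t',[w']) \leq (t,[w])$ with $t' < t$, so $(t,[w])$ satisfies the second defining condition of a branch point and lies in $B$. In particular $B \neq \emptyset$ even when $(s,[x])$ itself is not a branch point.

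Next I would set $(s_{0},[x_{0}]) = \bigcup B$, using that every finite collection of vertices has a least upper bound in $\Gamma_{k}(X)$. Iterating Lemma \ref{lem 7x} over the finite set $B$ shows that $(s_{0},[x_{0}])$ is again a branch point. Since each element of $B$ satisfies $(t,[w]) \leq (s,[x])$, the vertex $(s,[x])$ is an upper bound for $B$; as $(s_{0},[x_{0}])$ is the \emph{least} upper bound, $(s_{0},[x_{0}]) \leq (s,[x])$, and therefore $(s_{0},[x_{0}]) \in B$. By construction $(s_{0},[x_{0}]) \geq (t,[w])$ for every $(t,[w]) \in B$, so $(s_{0},[x_{0}])$ is the maximum of $B$, and a maximum of a poset is unique.

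The step I expect to need the most care is the legitimacy of reducing $\bigcup B$ to repeated binary joins: Lemma \ref{lem 7x} only handles a join of two branch points, so I would invoke the associativity of finite joins in $\Gamma_{k}(X)$ (exactly as used in the proof of Lemma \ref{lem 8x}) to iterate it over $B$. A secondary bookkeeping point is that, for an arbitrary spatial parameter $s$, the branch points below $(s,[x])$ coincide with those below $(\bar{s},[x])$, where $\bar{s}$ is the largest phase change number with $\bar{s} \leq s$, because $\pi_{0}L_{\ast,k}(X)$ is constant between phase changes; establishing this reduction lets me treat all relevant branch points as sitting at phase change levels and keeps $B$ finite.
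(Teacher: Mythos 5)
Your proposal is correct and follows essentially the same route as the paper, which likewise defines the maximal branch point as the least upper bound of the finite list of branch points below $(s,[x])$ and invokes Lemma \ref{lem 7x} to see that this join is again a branch point. Your additional checks (nonemptiness of $B$ via a type-2 branch point at the minimal level, finiteness, and the reduction to iterated binary joins) are sound and merely make explicit what the paper's one-line proof leaves implicit.
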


\begin{proof}
The least upper bound of the finite list of the branch points $(t,[y])$ such that $(t,[y]) \leq (s,[x])$ is a branch point, by Lemma \ref{lem 7x}.
\end{proof}

In the situation described by Lemma \ref{lem 11x}, I say that $(s_{0},[x_{0}])$ is the {\bf maximal branch point below} $(s,[x])$.

\begin{lemma}\label{lem 12x}
  Suppose that $(s_{0},[x_{0}])$ and $(t_{0},[y_{0}])$ are maximal branch points below the points $(s,[x])$ and $(t,[y])$, respectively.

  Then $(s_{0},[x_{0}]) \cup (t_{0},[y_{0}])$ is the maximal branch point below $(s,[x]) \cup (t,[y])$.
\end{lemma}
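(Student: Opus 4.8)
The plan is to establish the two defining properties of ``maximal branch point below $(s,[x]) \cup (t,[y])$'': first that $(s_{0},[x_{0}]) \cup (t_{0},[y_{0}])$ is a branch point lying below $(s,[x]) \cup (t,[y])$, and second that it dominates every branch point below $(s,[x]) \cup (t,[y])$. Throughout I abbreviate the four points as $a=(s,[x])$, $b=(t,[y])$, $a_{0}=(s_{0},[x_{0}])$, $b_{0}=(t_{0},[y_{0}])$, and I write $\beta(-)$ for the operation ``maximal branch point below'', whose existence and uniqueness come from Lemma \ref{lem 11x}. The first property is immediate: $a_{0}\leq a\leq a\cup b$ and $b_{0}\leq b\leq a\cup b$, so $a\cup b$ is an upper bound for $a_{0}$ and $b_{0}$, whence $a_{0}\cup b_{0}\leq a\cup b$; and $a_{0}\cup b_{0}$ is a branch point by Lemma \ref{lem 7x}. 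Consequently $a_{0}\cup b_{0}\leq \beta(a\cup b)$, since $\beta(a\cup b)$ is the \emph{largest} branch point below $a\cup b$. The whole content of the lemma is therefore the reverse inequality $\beta(a\cup b)\leq a_{0}\cup b_{0}$.

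The single structural fact I will exploit is that $\Gamma_{k}(X)$ is a \emph{tree} in the precise sense that the up-set $\{\,w : w\geq v\,\}$ of any vertex $v$ is totally ordered. This is built into the construction: a relation $(s,[x])\leq(t,[z])$ forces $[z]$ to be the image of $[x]$ under the function $\pi_{0}L_{s,k}(X)\to\pi_{0}L_{t,k}(X)$, so for each $t\geq s$ there is exactly one vertex above $(s,[x])$ at parameter $t$, and these are linearly ordered by $t$. In particular, any vertex sitting below two others forces those two into a common chain, making them comparable; this is the lever I will pull repeatedly.

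I would then split on whether $a$ and $b$ are comparable. If they are, say $b\leq a$, then $a\cup b=a$, so $\beta(a\cup b)=\beta(a)=a_{0}$; moreover $b_{0}\leq b\leq a$ exhibits $b_{0}$ as a branch point below $a$, forcing $b_{0}\leq a_{0}$ and hence $a_{0}\cup b_{0}=a_{0}$, which closes this case. The substantive case is when $a$ and $b$ are incomparable, so that $a,b< a\cup b=:u$. Here I claim the sharper statement $a_{0}\cup b_{0}=u$. Suppose instead $w:=a_{0}\cup b_{0}<u$. Since $a_{0}\leq w$ and $a_{0}\leq a$, the tree property makes $w$ and $a$ comparable; likewise $w$ and $b$ are comparable. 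If both $a\leq w$ and $b\leq w$ then $u=a\cup b\leq w<u$, which is absurd, so $w<a$ or $w<b$. But $w<a$ gives $b_{0}\leq w<a$, so $b_{0}$ lies below both $a$ and $b$, and then the tree property forces $a$ and $b$ to be comparable, a contradiction; the case $w<b$ is symmetric. Hence $w=u$, and since $a_{0}\cup b_{0}$ is a branch point (Lemma \ref{lem 7x}) this simultaneously shows $u$ itself is a branch point, so $\beta(u)=u=a_{0}\cup b_{0}$.

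The main obstacle is precisely the incomparable case, and within it the verification that $a_{0}\cup b_{0}$ cannot merge strictly below $u$; the delicate point is that this is false without the tree hypothesis and genuinely uses that a vertex lying below two others places those two in a common chain. A secondary bookkeeping point to get right is that $\beta$ of a branch point is that branch point itself, so that once I know $a_{0}\cup b_{0}=u$ is a branch point I may identify it with $\beta(u)$. No serious computation is involved beyond these order-theoretic manipulations.
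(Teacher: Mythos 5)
Your proof is correct and follows essentially the same route as the paper's: both dispose of the easy inequality via Lemma \ref{lem 7x} and then exploit the tree property (up-sets of a vertex are totally ordered) in a two-case analysis driven by maximality. The only difference is organizational --- you split on whether $(s,[x])$ and $(t,[y])$ are comparable, while the paper splits on whether the parameter $v$ of $(s_{0},[x_{0}])\cup(t_{0},[y_{0}])$ satisfies $v\leq t$ or $v>t$; these two dichotomies coincide and lead to the same terminal configurations.
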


\begin{proof}
  Suppose that $s \leq t$.
  
We have 
\begin{equation*}
(s_{0},[x_{0}]) \cup (t_{0},[y_{0}]) \leq (s,[x]) \cup (t,[y]).
\end{equation*}
and $(s_{0},[x_{0}]) \cup (t_{0},[y_{0}])$ is a branch point by Lemma \ref{lem 7x}.
Write
\begin{equation*}
(v,[z]) = (s_{0},[x_{0}]) \cup (t_{0},[y_{0}]). 
\end{equation*}

Suppose that $v \leq t$.
Then $(t_{0},[y_{0}]) \leq (t,[y])$ and $(t_{0},[y_{0}]) \leq (v,[z])$, so that $(v,[z]) \leq (t,[y])$ since $v \leq t$. Also, $(s_{0},[x_{0}]) \leq (s,[x])$ and $(s_{0},[x_{0}]) \leq (v,[z]) \leq (t,[y])$ so that $(s,[x]) \leq (t,[y])$. Then $(s_{0},[x_{0}]) \leq (t_{0},[y_{0}])$ by maximality, and it follows that
\begin{equation*}
  (s_{0},[x_{0}]) \cup (t_{0},[y_{0}]) = (t_{0},[y_{0}])
\end{equation*}
is the maximal branch point of
\begin{equation*}
  (s,[x]) \cup (t,[y]) = (t,[y])
\end{equation*}

  Suppose that $v >t$. Then $(s,[x]) = (s,[x_{0}]) \leq (v,[z])$ and $(t,[y]) = (t,[y_{0}]) \leq (v,[z])$ because $s \leq t <v$, so that
\begin{equation*}
  (s,[x]) \cup (t,[y]) \leq (s_{0},[x_{0}]) \cup (t_{0},[y_{0}]),
\end{equation*}
Thus, $(s_{0},[x_{0}]) \cup (t_{0},[y_{0}]) = (s,[x]) \cup (t,[y])$ is a branch point, 
by Lemma \ref{lem 7x}.
\end{proof}

The poset inclusion $\alpha: \Br_{k}(X) \to \Gamma_{k}(X)$ has an inverse
\begin{equation*}
  max: \Gamma_{k}(X) \to \Br_{k}(X),
\end{equation*}
  up to homotopy.

In effect, Lemma \ref{lem 11x} implies that every vertex $(s,[x])$ of $\Gamma_{k}(X)$ has a unique maximal branch point $(s_{0},[x_{0}])$ such that $(s_{0},[x_{0}]) \leq (s,[x])$. Set
\begin{equation*}
  max(s,[x]) = (s_{0},[x_{0}]).
\end{equation*}
The maximality condition implies that $max$ preserves the ordering. The composite
$max \cdot \alpha$ is the identity on $\Br_{k}(X)$, and the relations $(s_{0},[x_{0}]) \leq (s,x)$ define a homotopy $max \cdot \alpha \leq 1$.
\medskip

Return to the inclusion $i: X \subset Y \subset \mathbb{R}^{n}$ of finite data sets. Suppose that $X^{k+1}_{dis}$ is $r$-dense in $Y^{k+1}_{dis}$ and that $L_{s,k}(Y)$ is non-empty, as in the statement of Theorem \ref{th 3x}.

Write $i_{\ast}: \Br_{k}(X) \to \Br_{k}(Y)$ for the composite
\begin{equation*}
  \Br_{k}(X) \xrightarrow{\alpha} \Gamma_{k}(X) \xrightarrow{i_{\ast}} \Gamma_{k}(Y) \xrightarrow{max} \Br_{k}(Y)
  \end{equation*}
This map takes a branch point $(s,[x])$ to the maximal branch point below $(s,[i(x)])$. The map $i_{\ast}$ preserves least upper bounds by Lemma \ref{lem 7x}.

Poset morphisms $\theta_{\ast}: \Br_{k}(Y) \to \Br_{k}(X)$ and $s_{\ast}: \Br_{k}(X) \to \Br_{k}(X)$ are similarly defined, by the poset morphism $\theta: \Gamma_{k}(Y) \to \Gamma_{k}(X)$ with $(t,[y]) \mapsto (t+2r,[\theta(y)])$, and the shift morphism $s: \Gamma_{k}(X) \to \Gamma_{k}(X)$ with $(s,[x]) \mapsto (s+2r,[x])$.

The construction of the poset map $i_{\ast}: \Br_{k}(X) \to \Br_{k}(Y)$ is not functorial in maps of the form $X \to Y$, but it is functorial up to coherent homotopy.

Similarly, the map $i_{\ast}: \Br_{k}(X) \to \Br_{k}(Y)$ only preserves least upper bounds up to homotopy. Suppose that $(s,[x])$ and $(t,[y])$ are branch points of $X$, and let $(s_{0},[x_{0}]) \leq (s,[i(x)])$ and $(t_{0},[y_{0}]) \leq (t,[i(y)])$ be maximal branch points below the images of $(s,[x])$ and $(t,[y])$ in $\Gamma_{k}(Y)$. Then
\begin{equation*}
  (s_{0},[x_{0}]) \cup (t_{0},[y_{0}]) \leq (s,[i(x)]) \cup (t,[i(y)]),
\end{equation*}
so that
\begin{equation*}
  i_{\ast}(s,[x]) \cup i_{\ast}(t,[y]) \leq i_{\ast}((s,[x]) \cup (t,[y])).
\end{equation*}

Similar inequalities hold for least upper bounds with respect to the other maps that one encounters, namely $\theta_{\ast}: \Br_{k}(Y) \to \Br_{k}(X)$ and the shift map $s_{\ast}: \Br_{k}(X) \to \Br_{k}(X)$.
\medskip

\noindent
1)\ Consider the poset maps
\begin{equation*}
  \Br_{k}(X) \xrightarrow{i_{\ast}} \Br_{k}(Y) \xrightarrow{\theta_{\ast}} \Br_{k}(X).
\end{equation*}

If $(s,[x])$ is a branch point for $X$, choose maximal branch points $(s_{0},[x_{0}]) \leq (s,[i(x)]$ for $Y$, $(s_{1},[x_{1}]) \leq (s_{0}+2r,[\theta(x_{0})])$ and $(v,[y]) \leq (s+2r,[x])$ below the respective objects.

Then $\theta_{\ast}i_{\ast}(s,[x]) = (s_{1},[x_{1}])$, and there is a natural relation
\begin{equation*}
  \theta_{\ast}i_{\ast}(s,[x]) = (s_{1},[x_{1}]) \leq (v,[y]) = s_{\ast}(s,[x]) \leq (s+2r,[x]).
\end{equation*}
We therefore have a homotopy of poset maps
\begin{equation*}
  \theta_{\ast}i_{\ast} \leq s_{\ast}: \Br_{k}(X) \to \Br_{k}(X).
\end{equation*}

Note that $(s,[x]) \leq s_{\ast}(s,[x])$ since $(s,[x])$ is a branch point and $s_{\ast}(s,[x])$ is the maximal branch point below $(s+2r,[x])$. This means that the shift morphism $s_{\ast}$ is homotopic to the identity on $\Br_{k}(X)$.

The branch point $(s,[x])$ has a ``close'' shared upper bound $(s+2r,[x])$ with the element $(s_{0}+2r,[\theta(x_{0})])$, which is the image of the branch point $(s_{0},[x_{0}])$ under the poset map $\theta_{\ast}: \Gamma_{k}(Y) \to \Gamma_{k}(X)$.
\medskip

\noindent
2)\ Similarly, if $(t,[y])$ is a branch point of $Y$, then
\begin{equation*}
  i_{\ast}\theta_{\ast}(t,[y]) \leq s_{\ast}(t,[y]) \geq (t,[y])
\end{equation*}
while $s_{\ast}(t,[y]) \leq (t+2r,[y])$.

The element $(t+2r,[y])$ is a close shared upper bound for $(t,[y])$ and an element of the form $(t_{0},[i(y_{0})])$, where $(t_{0},[y_{0}])$ is a maximal branch point of $X$ below $(s+2r,[\theta(y)])$.
\medskip

\noindent
    {\bf Remark}:\ The subobject of $\Br_{k}(X)$ consisting of all branch points of the form $(s,[x])$ as $s$ varies has an obvious notion of distance on it: the distance between points $(s,[x])$ and $(t,[x])$ is $\vert t-s \vert$. The closeness referred to in constructions 1) and 2) above can be expressed in terms of such a distance. 
    \medskip

Suppose that $(s_{1},[x_{1}])$ and $(s_{2},[x_{2}])$ are branch points of $X$, let
\begin{equation*}
(s,[x]) = (s_{1},[x_{1}]) \cup (s_{2},[x_{2}]),
\end{equation*}
and write
\begin{equation*}
 (t,[y]) =  (s_{1},[i(x_{1})]) \cup (s_{2},[i(x_{2})]).
\end{equation*}
Then $(s,[i(x)])$ is an upper bound for $(s_{1},[i(x_{1})])$ and $(s_{2},[i(x_{2})])$, so that $(t,[y]) \leq (s,[i(x)])$.

The element $(t+2r,[\theta(y)])$ is an upper bound for $(s_{1},[x_{1}])$ and $(s_{2},[x_{2}])$, so that
\begin{equation}\label{eq 1xx}
  (s,[x]) \leq (t+2r,[\theta(y)]) \leq (s+2r,[x]).
\end{equation}

It follows that $s-2r \leq t \leq s$, which gives a constraint on the parameter $t$ corresponding to the least upper bound $(t,[y])$ in $\Gamma_{k}(Y)$, in terms of the least upper bound $(s,[x])$ in $\Gamma_{k}(X)$, or in $\Br_{k}(X)$. The number $2r$ is a bound on the distances between the three points in (\ref{eq 1xx}).
\medskip

If the bound $2r$ is sufficiently small, then $(s,[x])$ is the largest branch point below $(s+2r,[x])$ and $s_{\ast}(s,[x]) = (s,[x])$ in that case.
Similarly, in $\Gamma_{k}(Y)$, $s_{\ast}(t,[y]) = (t,[y])$ if  $2r$ is sufficiently small.

Recall that if $(s,[x])$ is a branch point, then $s=s_{i}$ is one of the phase shift numbers. Then $(s_{i},[x])$ is the maximal branch point below $(s_{i}+2r,[x])$ if $2r < s_{i+1}-s_{i}$.

\vfill\eject

\nocite{BlumLes}
\nocite{clusters9}

\bibliographystyle{plain}
\bibliography{spt}

\end{document}